\def\LaTeX{\leavevmode L\raise.42ex
    \hbox{\kern-.3em\size{\sf@size}{0pt}\selectfont A}\kern-.15em\TeX}
\newcommand{\BibTeX}{{\rm B\kern-.05em{\sc i\kern-.025emb}\kern-.08em\TeX}}
\newcommand{\R}{\mathbb{R}}
\newcommand{\HH}{\mathbb{H}}
\newcommand{\D}{\mathbb{D}}
\newcommand{\transpose}{\text{T}}
\newtheorem{theorem}{Theorem}[section]
\newtheorem{proposition}[theorem]{Proposition}
\newtheorem{corollary}[theorem]{Corollary}
\newtheorem{rem}[theorem]{Remark}
\theoremstyle{definition}
\theoremstyle{remark}
\def\alphenumi{%
  \def\theenumi{\alph{enumi}}%
  \def\p@enumi{\theenumi}%
  \def\labelenumi{(\@alph\c@enumi)}}
\newcounter{bk}
\newcounter{gm}
\begin{document}

\title[The Euler and Navier-Stokes equations on the hyperbolic plane] 
{The Euler and Navier-Stokes equations on the hyperbolic plane} 

%\author{Boris Khesin and Gerard Misio\l ek}
%
%\thanks{
%B.K.: School of Mathematics, Institute for Advanced Study, Princeton, NJ 08540, USA and Department of %Mathematics,
%University of Toronto, Toronto, ON M5S 2E4, Canada;
%e-mail: {\tt khesin@math.toronto.edu}}
% 
%\thanks{G.M.: School of Mathematics, Institute for Advanced Study, Princeton, NJ 08540, USA and 
%Department of Mathematics,
%University of Notre Dame, IN 46556, USA;
%e-mail: \tt{gmisiole@nd.edu}} 

%\maketitle

\author{Boris Khesin}
\address{B.K.: School of Mathematics, Institute for Advanced Study, Princeton, NJ 08450, USA 
and Department of Mathematics, University of Toronto, Toronto, ON M5S 2E4, Canada}
%Research address for author one
%\curraddr{}
%Current address for author one
\email{khesin@math.toronto.edu}
%  \thanks will become a 1st page footnote.
%  Don't type a period at the end; it will be supplied.
\thanks{B.K. was partially supported by the Simonyi Fund and an NSERC research grant}
\author{Gerard Misio\l ek}
\address{G.M.: School of Mathematics, Institute for Advanced Study, Princeton, NJ 08450, USA and 
Department of Mathematics, University of Notre Dame, IN 46556, USA} 
\email{gmisiole@nd.edu} 
\thanks{G.M. was partially supported by the James D. Wolfensohn Fund and 
the Friends of the Institute for Advanced Study Fund}

\maketitle
%\keywords{} 
%Math Subject Classifications 
%\subjclass{} 
\date{\today}

\begin{abstract} 
We show that non-uniqueness of the  Leray-Hopf solutions of the Navier--Stokes equation 
on the hyperbolic plane $\mathbb{H}^2$ observed in \cite{cc} 
%observed in \cite{cc} 
is a consequence of the Hodge decomposition. 
%of 1-forms (or vector fields). 
We show that this phenomenon does not occur on $\HH^n$ whenever $n\ge 3$.
We also describe the corresponding general Hamiltonian setting of hydrodynamics 
on complete Riemannian manifolds, which includes the hyperbolic setting. 
\end{abstract} 

\maketitle

%\numberwithin{equation}{section} 

%%%%%%%%%%%%%%%%%%%%%%%%%%%%%%%%%%%%%%%%%%%%%%%%%%%%
%%%%%%%%%%%%%%%%%%%%%%%%%%%%%%%%%%%%%%%%%%%%%%%%%%

\section*{Introduction} 

Consider the initial value problem for the Navier-Stokes equations 
on a complete $n$-dimensional Riemannian manifold $M$ 
\begin{align}  \label{eq:NS} 
&\partial_t v + \nabla_{\displaystyle v} v - Lv = - \mathrm{grad}\, p, 
\quad 
\mathrm{div}\, v = 0 
\\  \label{eq:ic} 
&v(0, x)=v_0(x). 
\end{align} 
The symbol $\nabla$ denotes the covariant derivative and 
$ 
L  = \Delta - 2 r
$ 
where $\Delta$ is the Laplacian on vector fields and $r$ is the Ricci curvature of $M$. 
Dropping the linear term $Lv$ from the first equation in \eqref{eq:NS} leads to 
the Euler equations of hydrodynamics 
\begin{equation}\label{eq:Euler}
\partial_t v + \nabla_{\displaystyle v} v = - \mathrm{grad}\, p, 
\quad 
\mathrm{div}\, v = 0. 
\end{equation}

\medskip

Most of the work on well-posedness of the Navier-Stokes equations has focused on 
the cases where $M$ is either a domain in $\mathbb{R}^n$ or the flat $n$-torus $\mathbb{T}^n$. 
In fundamental contributions J. Leray and E. Hopf established existence of 
an important class of weak solutions described as those divergence-free vector fields 
$v$ in $L^\infty([0,\infty), L^2) \cap L^2([0,\infty), H^1)$ 
which solve the Navier-Stokes equations in the sense of distributions 
and satisfy 
\begin{equation} \label{eq:energy} 
\| v(t)\|_{L^2}^2 + 4\int_0^t \| \mathrm{Def}\, v(s)\|_{L^2}^2 ds \leq \|v_0\|_{L^2}^2 
\quad 
\text{and}
\quad 
\lim_{t\searrow 0}\|v(t)-v_0\|_{L^2}=0 
\end{equation} 
for any $0 \leq t < \infty$ and where $\mathrm{Def}\, v = \frac{1}{2}(\nabla v + \nabla v^\transpose)$ 
is the so-called deformation tensor. 
When $n=2$ using interpolation inequalities and energy estimates 
it is possible to show that the Leray-Hopf solutions are unique and regular 
but the problem is in general open for $n= 3$, see e.g. \cite{cf} or \cite{mb}. 

There have also been  studies on curved spaces, which with few exceptions 
 have been confined to compact manifolds (possibly with boundary), 
see e.g. \cite{ta} and the references therein. 
In a recent paper Chan and Czubak \cite{cc} studied the Navier-Stokes equation 
on the hyperbolic plane $\mathbb{H}^2$ and more general non-compact manifolds 
of negative curvature. 
In particular, using the results of Anderson \cite{an} and Sullivan \cite{su} 
on the Dirichlet problem at inifnity, they showed that in the former case 
the Cauchy problem \eqref{eq:NS}-\eqref{eq:ic} admits non-unique Leray-Hopf solutions.

\smallskip

Our goal in this note is to provide a direct formulation of the non-uniqueness 
of the Leray-Hopf solutions on $\mathbb{H}^2$ which turns out to rely on the specific form of 
the Hodge decomposition for 1-forms (or vector fields) in this case.
We also  show that no such phenomenon can occur in the hyperbolic space $\HH^n$ with $n\ge 3$. 
As a by-product, we  describe the corresponding Hamiltonian setting of the Euler equations 
on complete Riemannian manifolds (in particular, hyperbolic spaces).

We point out that this type of non-uniqueness cannot be found in the Euler equations. 
Furthermore, it is of a different nature than the examples constructed e.g., by Shnirelman \cite{sh} or 
 De Lellis and Sz\'ekelyhidi \cite{ds}. 
On the other hand, it is similar to non-uniqueness of solutions of the Navier-Stokes equations 
defined in unbounded domains of the higher-dimensional Euclidean space, cf. Heywood \cite{he}. 
%as we discuss below. 

%%%%%%%%%%%%%%%%%%%%%%
%%%%%%%%%%%%%%%%%%%%%%%%%%%
\bigskip

\section{Stationary harmonic solutions of the Euler equations} 

Our main result is summarized in the following theorem. 

\begin{theorem} \label{thm:main}
\mbox{}
\begin{itemize} 
\item[(i)] 
There exists an infinite-dimensional space of stationary $L^2$ harmonic solutions of 
the Euler equations on $\mathbb{H}^2$.
\item[(ii)] 
There are no stationary $L^2$ harmonic solutions of the Euler equations 
on $\mathbb{H}^n$ for any $n > 2$. 
\end{itemize} 
\end{theorem}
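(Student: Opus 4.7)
The plan is to reduce both assertions to a computation of the space $\mathcal{H}^1_{L^2}(\HH^n)$ of square-integrable harmonic 1-forms on real hyperbolic space, and then to invoke the classical theory of $L^2$-cohomology of symmetric spaces. The starting point is the pointwise identity
\[
(\nabla_v v)^\flat \;=\; \iota_v\, d\alpha \;+\; \tfrac{1}{2}\,d|v|^2,
\qquad \alpha := v^\flat,
\]
valid on any Riemannian manifold and following from the torsion-free property of the Levi--Civita connection. If $\alpha$ is Hodge-harmonic, i.e.\ $d\alpha=0$ and $\delta\alpha=0$, then $\nabla_v v = \tfrac{1}{2}\grad|v|^2$ and $\diver v = -\delta\alpha = 0$, so $v$ automatically satisfies the stationary system \eqref{eq:Euler} with pressure $p=-\tfrac12|v|^2$. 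Conversely, every stationary solution that is Hodge-harmonic as a 1-form arises in this way, so Theorem~\ref{thm:main} becomes the dichotomy: $\mathcal{H}^1_{L^2}(\HH^2)$ is infinite-dimensional while $\mathcal{H}^1_{L^2}(\HH^n)=0$ for $n\geq 3$.

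For part~(i) I would exploit the two-dimensional conformal invariance of the $L^2$-pairing on 1-forms. In the Poincar\'e disk model with metric $g = 4(1-|z|^2)^{-2}|dz|^2$, a direct computation in isothermal coordinates gives $|\alpha|_g^2\, dV_g = |\alpha|^2\, dA$ for every 1-form on $\D$. Hence $\mathcal{H}^1_{L^2}(\HH^2)$ coincides with the space of $L^2$ harmonic 1-forms on the flat unit disk, and the real and imaginary parts of the holomorphic 1-forms $z^k\, dz$, $k=0,1,2,\dots$, furnish an explicit linearly independent family that is manifestly square-integrable with respect to Lebesgue measure on $\D$, yielding the infinite-dimensional space claimed in~(i).

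For part~(ii) I would use that $\HH^n$ is simply connected, so that any harmonic $L^2$ 1-form $\alpha$ is exact, $\alpha = df$, with $f$ a harmonic function of finite Dirichlet energy. The desired vanishing is then a special case of the $L^2$-Hodge theorem of Dodziuk, which asserts $\mathcal{H}^k_{L^2}(\HH^n)=0$ unless $k=n/2$; since $1\neq n/2$ for $n\geq 3$, this closes the argument. A self-contained proof can be obtained by expanding $f$ into spherical harmonics on geodesic spheres centred at a basepoint and analysing the resulting radial Sturm--Liouville problem on $(0,\infty)$ with weight $\sinh^{n-1}r$: harmonicity, finite Dirichlet energy, and exponential volume growth for $n\geq 3$ force every radial mode to vanish.

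The main obstacle lies in this last vanishing step. The naive Bochner--Weitzenb\"ock identity on $\HH^n$ only yields $\int|\nabla\alpha|^2=(n-1)\int|\alpha|^2$ for a harmonic $\alpha$, which is a nontrivial relation but does not by itself force $\alpha\equiv 0$; genuinely hyperbolic spectral information, beyond pointwise curvature bounds, is needed to rule out eigenforms of $\nabla^*\nabla$ with eigenvalue $n-1$ in $L^2$. By contrast, part~(i) requires no analysis past the conformal change of variable, which is why the two cases split so cleanly.
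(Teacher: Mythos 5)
Your proposal is correct, but it gets to the same two conclusions by a genuinely different route from the paper, and the comparison is instructive. For the step ``harmonic 1-form $\Rightarrow$ steady Euler solution'' the paper works in the Hamiltonian formulation on the dual space $\Omega^1_{L^2}/\overline{d\Omega^0_{L^2}}$, where the Bernoulli-type term $d\iota_{v_\alpha}\alpha$ must lie in the closure of differentials of $L^2$ functions; this is why its Proposition assumes $\alpha\in L^2\cap L^4$, and why the $\HH^2$ construction goes through harmonic extensions of $C^{1+\alpha}$ boundary data with a gradient bound securing the $L^4$ condition. You instead use the pointwise identity $(\nabla_v v)^\flat=\iota_v d\alpha+\tfrac12 d|v|^2$ and exhibit the pressure $p=-\tfrac12|v|^2$ explicitly, so in the classical (or distributional, tested against compactly supported divergence-free fields) formulation no integrability of the pressure is needed; if one insists on the paper's coset formulation you would additionally need $\iota_v\alpha=|v|^2\in L^2$, but for your explicit family this is automatic since $|d\Phi|_e$ is bounded on $\D$, so your forms in fact satisfy the paper's $L^2\cap L^4$ hypothesis as well. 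For the infinite-dimensionality in (i) you invoke the two-dimensional conformal invariance of both harmonicity and the $L^2$ norm of 1-forms to identify $\mathcal{H}^1_{L^2}(\HH^2)$ with the space of $L^2$ harmonic 1-forms on the flat disk and then list the explicit family given by the real and imaginary parts of $z^k\,dz$; this is a cleaner, fully explicit version of the paper's computation (its cancellation $\det(g^{ij})\det(g_{ij})=1$ is exactly this conformal invariance), and your family consists of differentials of bounded harmonic functions, i.e.\ it sits inside the paper's subspace $\mathcal{S}$ obtained from the Dirichlet problem. For (ii) both you and the paper ultimately rest on Dodziuk's theorem that $\mathcal{H}^k_{L^2}(\HH^n)=0$ unless $k=n/2$; your reduction via simple connectivity to Dirichlet-finite harmonic functions is a fine reformulation, your remark that the Bochner identity alone cannot close the argument (the Ricci term has the wrong sign) is accurate, and the sketched spherical-harmonics/radial ODE alternative is plausible but not carried out, so as written Dodziuk remains the essential input, exactly as in the paper. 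In short: the paper's route buys a formulation adapted to its Hamiltonian picture of the dual space (at the price of the $L^4$ hypothesis), while yours buys an elementary pointwise verification with an explicit pressure and an explicit infinite family.
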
 

\begin{proof} 
Recall the Hamiltonian formulation of the Euler equations \eqref{eq:Euler} 
on a complete Riemannian manifold $M$, see e.g. \cite{AK}. 
Consider the Lie algebra $\frak{g}_{\mathrm{reg}}=\mathrm{Vect}_\mu(M)$
of (sufficiently smooth) divergence-free vector fields on $M$ with finite $L^2$ norm. 
Its dual space $\frak{g}^\ast_{\mathrm{reg}}$ has a natural description as the quotient space 
$\Omega^1_{L^2}/\overline{ d\Omega^0_{L^2} }$ 
of the $L^2$ 1-forms modulo (the $L^2$ closure of) the exact 1-forms on $M$. 
Namely, the pairing between cosets $[\beta]\in \Omega^1_{L^2}/\overline{ d\Omega^0_{L^2} }$ of 1-forms $\beta \in \Omega^1_{L^2}$ and vector fields $w\in \mathrm{Vect}_\mu(M)$ is given by 
$$
\langle [\beta],w\rangle:=\int_M (\iota_w \beta)\, d\mu\,,
$$
where $\iota_w$ is the contraction of a differential form with a vector field $w$, and $\mu$ is the Riemannian volume form on $M$.

Let $A: \frak{g}_{\mathrm{reg}} \to \frak{g}^\ast_{\mathrm{reg}}$ denote the inertia operator 
defined by the Riemannian metric. 
The operator $A$ assigns to a vector field $v \in \mathrm{Vect}_\mu(M)$ 
the coset $[v^\flat]$ of the corresponding 1-form $v^\flat$  via the pairing given by the  metric. The coset is defined as the 1-form up to adding differentials of the $L^2$ functions on $M$. 
Thus, in the Hamiltonian framework the Euler equation reads 
$$
\frac{d}{dt} [v^\flat] = - L_{\displaystyle v} [v^\flat],
$$
where $[v^\flat] \in \Omega^1_{L^2}/\overline{ d\Omega^0_{L^2} }$
and $L_{\displaystyle v}$ is the Lie derivative in the direction of the vector field $v$. 

The space $\Omega^1_{L^2}$ of the $L^2$ 1-forms on a complete manifold $M$ 
admits the Hodge-Kodaira decomposition 
$$
\Omega^1_{L^2} 
= 
\overline{ d\Omega^0_{L^2} }\oplus\overline{ \delta\Omega^2_{L^2} }\oplus \mathcal{H}^1_{L^2},
$$ 
where the first two summands denote the $L^2$ closures of the images 
of the operators $d$ and $\delta$, while $\mathcal{H}^1_{L^2}$ is the space of 
the $L^2$ harmonic 1-forms on $M$. 
Therefore, we have a natural representation of the dual space 
$$
\frak{g}^\ast_{\mathrm{reg}}=\overline{\delta\Omega^2_{L^2}}\oplus \mathcal{H}^1_{L^2}.
$$

It turns out that the summand of the harmonic forms in the above representation corresponds to 
 steady solutions of the Euler equation. Namely, one has the following proposition. 

\begin{proposition} \label{prop} 
Each harmonic 1-form on a complete manifold $M$ which belongs to $L^2\cap L^4$ 
defines a steady solution of the Euler equation \eqref{eq:Euler} on $M$.
\end{proposition}

%If $M$ is of infinite volume we cannot bound the $L^2$-norm by the $L^4$-norm.

\begin{proof}[Proof of Proposition \ref{prop}] 
Let $\alpha$ be a bounded $L^2$ harmonic 1-form on $M$. 
Let $v_\alpha$ denote the divergence-free vector field corresponding to $\alpha$, 
i.e., $v_\alpha^\flat=\alpha$. 
Since the 1-form $\alpha$ is harmonic, using Cartan's formula gives 
$$
\frac{d}{dt} \alpha = -L_{\displaystyle v_\alpha}\alpha 
= 
-\iota_{\displaystyle v_\alpha}d \alpha  -d\iota_{\displaystyle v_\alpha}\alpha 
= 
- d\iota_{\displaystyle v_\alpha}\alpha.
$$
%where $\iota_{\displaystyle v_\alpha}$ is the inner product with the vector field $v_{\alpha}$. 

We claim that $\iota_{\displaystyle v_\alpha}\alpha\in \Omega^0_{L^2}$ and consequently 
$d\iota_{\displaystyle v_\alpha}\alpha\in d \Omega^0_{L^2}$. 
Indeed, by the definition of the vector field $v_\alpha$ we have
$$
\|\iota_{\displaystyle v_\alpha}\alpha\|^2_{L^2} 
= 
\int_M \big( \alpha(v_{\alpha}) \big)^2 d\mu 
= 
\|\alpha\|^4_{L^4},
$$ 
which is finite by assumption.
%\le \|\alpha\|^2_{L^\infty}\|\alpha\|^2_{L^2}
It follows that the 1-form $d\iota_{\displaystyle v_\alpha}\alpha$ must  correspond to 
the zero coset in the quotient space 
$\frak{g}^\ast_{\mathrm{reg}}= \Omega^1_{L^2}/\overline{ d\Omega^0_{L^2} }$, 
which in turn implies that $\frac{d}{dt} \alpha=0\in \frak{g}^\ast_{\mathrm{reg}}$. 
%Indeed, by definition of the field $v_\alpha$ we have
%$$
%\|i_{v_\alpha}\alpha\|^2_{L^2}=\|\alpha\|^4_{L^4}\le \|\alpha\|^2_{L^\infty}\|\alpha\|^2_{L^2}<\infty,
%$$
%since the 1-form $\alpha$ is bounded and in $L^2$. 
The latter means that the 1-form $\alpha$ defines a steady solution of the Euler equation, which proves the proposition.
\end{proof} 

%\medskip 

If $M$ is compact then the space of harmonic 1-forms 
is always finite-dimensional (and isomorphic to the deRham cohomology group $H^1(M)$). 
%It is zero for $M=\mathbb{R}^n$. 
According to a well-known result of Dodziuk \cite{do}, 
the hyperbolic space $\mathbb{H}^n$ carries no $L^2$ harmonic $k$-forms except for $k=n/2$, 
in which case it is infinite-dimensional. 
Therefore, there can be 
%Since we are interested in $1$-forms, there are 
no $L^2$ harmonic stationary solutions of the Euler equations 
on $\mathbb{H}^n$ for any $n>2$, 
which proves  part (ii) of the theorem.

\smallskip

To prove part (i) we note that for $n=2$  the space of harmonic 1-forms 
on $\mathbb{H}^2$ is infinite-dimensional. Moreover, it allows for the following construction.
Consider the subspace $\mathcal{S} \subset \mathcal{H}^1_{L^2}$ 
of 1-forms which are differentials of bounded harmonic functions whose differentials 
are in $L^2$ 
$$
\mathcal{S}  = \left\{ d\Phi~|~\Phi ~\text{is harmonic on~}  \HH^2 ~\text{and}~ d\Phi \in L^2 \right\}.
$$ 
It turns out that the subspace $\mathcal{S}$ is  already infinite-dimensional.
Indeed, let us consider the Poincar\'e model of  $\mathbb{H}^2$, i.e., 
the unit disk   $\mathbb{D}$ with the hyperbolic metric $\langle ~,~ \rangle_{h}$, which we denote by $\mathbb{D}_{h}$. 
It is conformally equivalent to the standard unit disk   with the Euclidean metric $\langle ~,~ \rangle_{e}$, denoted by $\mathbb{D}_e$. 
Bounded harmonic functions on $\mathbb{D}_{h}$ can be obtained by 
solving the Dirichlet problem on $\mathbb{D}_{e}$, i.e., 
by constructing  harmonic functions $\Phi$ on $\D$ with boundary values $\varphi$ 
prescribed on $\partial \D$.
First, the 1-form $d\Phi$ is clearly harmonic: 
$$ 
\Delta d\Phi=d\delta d\Phi=d\Delta \Phi=0. 
$$ 
Secondly, observe that
\begin{align*}  
\|d\Phi\|^2_{L^2(\D_{h})}
&=
\int_\D \langle d\Phi, d\Phi \rangle_{h} \,d\mu_{h}
= 
\int_\D \det (g^{ij}) \langle d\Phi, d\Phi \rangle_{e} \det (g_{ij}) \,d\mu_{e} 
\\
&=
\int_\D  \langle d\Phi, d\Phi \rangle_{e}  \,d\mu_{e}
= 
\|d\Phi\|^2_{L^2(\D_{e})},
\end{align*}
and
\begin{align*}  
\|d\Phi\|^4_{L^4(\D_{h})}
&=
\int_\D \langle d\Phi, d\Phi \rangle^4_{h} \,d\mu_{h}
=
\int_\D {\det}^2 (g^{ij}) \langle d\Phi, d\Phi \rangle^2_{e} \det (g_{ij}) \,d\mu_{e}
\\
&=
\int_\D (1-|z|^2)^2 \langle d\Phi, d\Phi \rangle^2_{e}  \,d\mu_{e}(z)
\le 
\int_\D  \langle d\Phi, d\Phi \rangle^2_{e}  \,d\mu_{e}
=
\|d\Phi\|^4_{L^4(\D_{e})},
\end{align*}
where $\det (g_{ij})=1/(1-|z|^2)^2$ is the determinant of the hyperbolic metric.

Furthermore, for sufficiently smooth boundary values $\varphi\in C^{1+\alpha}(\partial \D)$ 
there is a uniform upper bound for its harmonic extension inside the disk: 
$$
|d\Phi(x)| \leq C \|\varphi\|_{C^{1+\alpha}(\partial \D)}
$$ 
for any $x\in \D$  and $0 < \alpha < 1$, and some positive constant $C$, see e.g. \cite{gt}.
This implies that 
(for sufficiently smooth $\varphi$) 
the 1-forms $d\Phi$ define an infinite-dimensional subspace $\mathcal{S}$ of harmonic forms 
in $L^2\cap L^4$, which satisfy assumptions of  the proposition above. 
It follows that they define an infinite-dimensional space of 
stationary solutions of the Euler equations on the hyperbolic plane $\HH^2$.
This completes the proof of %item (i) of 
Theorem~\ref{thm:main}.
\end{proof} 

\bigskip

\section{Non-unique Leray-Hopf solutions of the Navier-Stokes equations} 

%In a recent paper \cite{cc} the authors observed that 
%a steady solution of the Euler equations defines can be related to 
%a solution of the Navier-Stokes equation by a time-dependent rescaling 
%and, moreover, that there are multiple ways %to choose the time function 
%in which this can be done so as to satisfy the constraints of the Hopf--Leray solutions.

Using the fact that suitably rescaled %(by a time-dependent function) 
steady solutions of the Euler equations also solve the Navier-Stokes system 
the authors in \cite{cc} obtained a type of ill-posedness result 
for the Leray-Hopf solutions in the hyperbolic setting. 

\begin{theorem}[\cite{cc}] \label{thmCC} 
Given a vector field $v_e = (d\Phi)^\sharp$ on $\mathbb{H}^2$ 
there exist infinitely many real-valued functions $f(t)$ for which $v_{ns} = f(t) v_e$ is a weak solution of 
the Navier-Stokes equations with decreasing energy (i.e., satisfying the Leray-Hopf conditions). 
\end{theorem}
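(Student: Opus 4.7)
The plan is to insert the ansatz $v_{ns}=f(t)\,v_e$ into the Navier--Stokes system \eqref{eq:NS} and exploit a feature of $\HH^2$ hidden in the hypothesis: the vector field $v_e=(d\Phi)^\sharp=\mathrm{grad}\,\Phi$ is simultaneously divergence-free (because $\Phi$ is harmonic) and an honest gradient. This double nature, which is precisely what produces the non-trivial subspace $\mathcal{S}\subset\mathcal{H}^1_{L^2}$ in the proof of Theorem~\ref{thm:main}(i), allows the pressure to absorb essentially every term in the equation and thereby leaves the time profile $f(t)$ almost free.

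The first step is to verify, term by term, that under this ansatz every summand on the left of \eqref{eq:NS} is a gradient. The time derivative gives $\partial_t v_{ns}=f'(t)\,\mathrm{grad}\,\Phi=\mathrm{grad}(f'(t)\Phi)$. The nonlinear term, by the same Cartan-formula computation as in the proof of Proposition~\ref{prop} (using that $v_e^\flat=d\Phi$ is closed), equals $\nabla_{v_{ns}}v_{ns}=f^2\nabla_{v_e}v_e=\tfrac{1}{2}f^2\,\mathrm{grad}\,|v_e|^2$. For the viscous term $Lv_{ns}=f\,Lv_e$, I would use the Weitzenb\"ock identity on Hodge-harmonic 1-forms, $\nabla^*\nabla(d\Phi)=-\mathrm{Ric}(d\Phi)$, together with $\mathrm{Ric}_{\HH^2}=-g$, to identify $Lv_e$ as a real scalar multiple $\lambda v_e$; being a scalar multiple of $\mathrm{grad}\,\Phi$, it is again a gradient. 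Collecting the three gradients into the pressure,
\[
p_{ns}(t,x)=c(t)-f'(t)\Phi(x)-\tfrac{1}{2}f(t)^2|v_e(x)|^2+\lambda f(t)\Phi(x),
\]
makes the Navier--Stokes equation hold identically for \emph{every} smooth $f(t)$.

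The second step is to verify the Leray--Hopf conditions \eqref{eq:energy}. Since the spatial profile is fixed, $\|v_{ns}(t)\|_{L^2}^2=f(t)^2\|v_e\|_{L^2}^2$, and a short computation via Bochner's formula on $\Phi$ (using $\Delta\Phi=0$ and $\mathrm{Ric}_{\HH^2}=-g$) yields $\|\mathrm{Def}\,v_e\|_{L^2}^2=\|v_e\|_{L^2}^2$. The energy inequality \eqref{eq:energy} then reduces to the scalar inequality
\[
f(t)^2+4\int_0^t f(s)^2\,ds\;\leq\;f(0)^2,
\]
which is saturated by $f(t)=f(0)e^{-2t}$ and satisfied with strict inequality by a vast family of other profiles (for instance $f(t)=f(0)e^{-\lambda t}$ for every $\lambda\geq 2$). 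Fixing $f(0)$ while letting the decay rate vary yields infinitely many distinct Leray--Hopf solutions sharing the same initial datum $v_0=f(0)v_e$, which is exactly the non-uniqueness feature of the Cauchy problem on $\HH^2$.

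The main obstacle will be the Weitzenb\"ock step, i.e.\ pinning down that $Lv_e$ is a scalar multiple of $v_e$ on $\HH^2$; this requires a careful unwinding of the sign convention for $L=\Delta-2r$. Once that is settled, the identity $\|\mathrm{Def}\,v_e\|_{L^2}=\|v_e\|_{L^2}$ follows from a single application of Bochner's formula to the harmonic $\Phi$, and the remainder of the argument is bookkeeping.
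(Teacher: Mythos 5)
Your proposal is correct and follows essentially the same route as the paper's Appendix: insert the ansatz $v_{ns}=f(t)v_e$, observe that every term (time derivative, nonlinearity, and the viscous term, which on the harmonic field $v_e$ reduces to a constant multiple of $d\Phi$) is exact and can be absorbed into the pressure, and then reduce the Leray--Hopf energy condition to the scalar inequality $f^2(t)+4\int_0^t f^2(s)\,ds\le f^2(0)$. The only cosmetic differences are that the paper works with $1$-forms and the Hodge Laplacian (so $\Delta v_{ns}^\flat=0$ directly, leaving the Ricci term $-2f(t)\,d\Phi$) where you invoke Weitzenb\"ock on vector fields, and that you spell out the identity $\|\mathrm{Def}\,v_e\|_{L^2}=\|v_e\|_{L^2}$ via Bochner and an explicit exponential family of admissible $f$, steps the paper leaves to ``direct computation'' and ``quick inspection''.
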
 

An immediate consequence of this result and Theorem \ref{thm:main} is  the following 

\begin{corollary} There exist infinitely many weak Leray-Hopf
solutions to the Navier-Stokes equation on $\mathbb{H}^2$. 
There are no   non-unique Leray-Hopf harmonic 
solutions to the  Navier-Stokes equation on $\HH^n$ with $n\ge 3$ arising  from the above construction. %of Theorem \ref{thmCC}.
\end{corollary}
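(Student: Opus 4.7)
The plan is to combine Theorem~\ref{thm:main} with the rescaling construction recorded as Theorem~\ref{thmCC}. For the first statement, I would begin from part~(i) of Theorem~\ref{thm:main}, which furnishes an infinite-dimensional family of stationary harmonic Euler solutions $v_e=(d\Phi)^\sharp$ on $\mathbb{H}^2$ with $d\Phi\in L^2\cap L^4$. Fix any one nonzero $v_e$ from this family. Theorem~\ref{thmCC} then produces infinitely many scalar time profiles $f(t)$ such that $v_{ns}=f(t)v_e$ is a Leray--Hopf weak solution of the Navier--Stokes system. Since the space of admissible profiles is infinite, I can pick two profiles $f_1,f_2$ that agree at $t=0$ but differ on $(0,\infty)$; the corresponding vector fields $f_i(t)v_e$ then share the common initial datum $v_0=f_1(0)v_e$ while being genuinely distinct weak solutions with decreasing energy. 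Varying $v_e$ over the infinite-dimensional family in Theorem~\ref{thm:main}(i) multiplies this phenomenon.

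For the second statement, the argument is essentially by obstruction. The construction recalled in Theorem~\ref{thmCC} takes as input a stationary harmonic Euler solution $v_e=(d\Phi)^\sharp$ with $d\Phi$ in the appropriate integrability class, and its output Leray--Hopf solutions $f(t)v_e$ vanish identically when $v_e=0$. By part~(ii) of Theorem~\ref{thm:main} (via Dodziuk's theorem, since $1\ne n/2$ for $n\ge 3$), the only $L^2$ harmonic $1$-form on $\mathbb{H}^n$ with $n\ge 3$ is the zero form, so no nontrivial $v_e$ is available as input to the construction. Hence the mechanism yields only the trivial solution and cannot be invoked to contradict uniqueness on $\mathbb{H}^n$ for $n\ge 3$.

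The only genuinely delicate point is making sure that within the infinite family of profiles $f$ produced by Theorem~\ref{thmCC} one can select two with a common initial value; but this is immediate from the structure of that construction, where the ODE governing $f$ leaves the initial value as a free parameter independent of the remaining freedom used to generate non-uniqueness. Once that observation is made, both halves of the corollary follow by direct citation of the two theorems, with no further analytic input required.
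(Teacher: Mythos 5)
Your proposal is correct and follows the same route as the paper, which presents the corollary as an immediate consequence of Theorem~\ref{thmCC} combined with Theorem~\ref{thm:main}: part (i) supplies the infinite-dimensional family of inputs $v_e=(d\Phi)^\sharp$ on $\mathbb{H}^2$, while part (ii) (via Dodziuk) shows no nonzero $L^2$ harmonic input exists on $\mathbb{H}^n$, $n\ge 3$, so the construction yields nothing there. Your extra remark about choosing profiles $f_1,f_2$ with the same initial value is consistent with the appendix, where the only constraint on $f$ is the energy inequality $f^2(t)+4\int_0^t f^2(s)\,ds\le f^2(0)$, which admits many such choices.
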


%%%%%%%%%%%%%%%%%%%%%
\begin{rem} 
{\rm 
The phenomenon of nonuniqueness of solutions to the Navier-Stokes equation in unbounded domains  
$\Omega\subset\R^n, \,n\ge 3$, of higher-dimensional Euclidean spaces  is of similar nature, see  \cite{he}. 
Indeed, that construction is based on the existence of a harmonic function with gradient 
in $L^2$  and appropriate boundary conditions in such domains. 
%In turn, e.g., the 
The Green function 
$\Phi(x)=G(a,x)$ centered at a point $a$ outside of $\Omega$ has the decay like 
$G(a,x)\sim |x|^{2-n}$ as $x\to\infty$, so that $|d\Phi(x)|\sim |x|^{1-n}$ and hence 
$|d\Phi(x)|^2\sim |x|^{2-2n}$. Thus, for $n\ge 3$ the 1-forms $d\Phi$ belong to $L^2\cap L^4$ 
on $\Omega$. The corresponding divergence-free vector fields $(d\Phi)^\sharp$ provide examples of
stationary Eulerian solutions in $\Omega$ (with nontrivial boundary conditions) and can be used to construct 
time-dependent weak  solutions $v_{ns}=f(t)(d\Phi)^\sharp$ to the Navier-Stokes equation in $\Omega$, 
as in Theorem \ref{thmCC}. 
}
\end{rem}
%%%%%%%%%%%%%%%%%%%%%%%%%% 

\bigskip

%%%%%
\section{Appendix} 

To make this note self-contained we provide here some details of the construction 
of the weak solutions given in \cite{cc}. 
It will be convenient to rewrite the Navier-Stokes equations \eqref{eq:NS} 
in the language of differential forms 
\begin{equation} \label{eq:NSdf} 
\partial_t v^\flat + \nabla_{\displaystyle v} v^\flat -  \Delta v^\flat + 2r(v^\flat) = -dp, 
\quad 
\delta v^\flat =0 
\end{equation} 
where $\delta v^\flat = -\mathrm{div}\, v$ 
and 
$\Delta v^\flat = d\delta v^\flat + \delta d v^\flat$ 
is the Laplace-deRham operator on 1-forms. 

Let $v$ be the vector field $v_{ns}=f(t)(d\Phi)^\sharp$ on $\mathbb{H}^2$ as in Theorem \ref{thmCC}. 
Since the 1-form $d\Phi$ is harmonic one only needs to compute the covariant derivative term and 
the Ricci term: 
$$ 
\nabla_{\displaystyle v_{ns}} v_{ns}^\flat = \frac{1}{2} f^2(t) \, d|d\Phi|^2
\qquad 
\text{and} 
\qquad 
2r(v_{ns}^\flat) = - 2f(t) d\Phi.
$$ 
Direct computation, taking into account the fact that for $\mathbb{H}^2$ we have $r=-1$, 
shows that
both terms can be absorbed by the pressure term, 
so that the pair $(v_{ns}^\flat, p)$,  
where 
$p := (2f(t)-f'(t))\Phi - 1/2 f^2(t) |d\Phi|^2$ 
satisfies the equations \eqref{eq:NSdf}. 

Finally, a quick inspection shows that any differentiable function $f(t)$ satisfying 
$$ 
f^2(t) + 4 \int_0^t f^2(s) \, ds \leq f^2(0)
$$ 
yields a vector field $v_{ns}$ which satisfies the remaining conditions in \eqref{eq:energy} 
required of a Leray-Hopf solution.

%%%%%%%%%%%%%%%%%%%%%%%%%%%%%%%%%
%%%%%%%%%%%%%%%%%%%%%%%%%%%%%%%%%%%

\end{document}